\documentclass[11pt]{article}

\usepackage[margin=1in]{geometry}
\usepackage{amsmath,amssymb,amsthm}
\usepackage{autonum}
\usepackage{hyperref}
\hypersetup{hidelinks}
\makeatletter
\AtBeginDocument{\autonum@generatePatchedReference{eqref}}
\makeatother

\newtheorem{theorem}{Theorem}
\newtheorem{lemma}[theorem]{Lemma}

\newtheorem{conjecture}[theorem]{Conjecture}
\newtheorem{corollary}[theorem]{Corollary}
\newtheorem{remark}[theorem]{Remark}
\newtheorem{question}[theorem]{Question}

\DeclareMathOperator{\Vol}{vol}
\newcommand{\R}{\mathbb{R}}
\newcommand{\Z}{\mathbb{Z}}
\newcommand{\covol}{\operatorname{covol}}
\newcommand{\dL}{\delta_{\mathrm L}}
\newcommand{\dT}{\delta_{\mathrm T}}
\newcommand{\dall}{\delta}
\newcommand{\alphaE}{\alpha}
\newcommand{\LamNine}{\Lambda_9}
\newcommand{\tr}{\operatorname{tr}}
\newcommand{\dd}{\,\mathrm{d}}

\title{On two counterexamples in the geometry of numbers}
\author{Nihar P. Gargava\\
}
\date{July 13, 2026}

\begin{document}
\maketitle

\begin{abstract}

We give counterexamples to two optimization
problems in dimensions eight and nine.

\begin{enumerate}
\item The Cartesian-product problem posed by Cassels for critical
determinants and later formulated by Zong for lattice packings and for
packings allowing translations but not rotations: whether the corresponding
product inequalities are always equalities \cite{Cassels1971,Zong2005}.
\item A question raised by Sarnak and formulated as a conjecture in Chiu
\cite{Chiu1997}: whether, among unit-volume flat tori, height is minimized by
a lattice maximizing the length of its shortest nonzero vector.
\end{enumerate}

The first counterexample is exact and also disproves the natural product formula
for unrestricted congruent packings.  
The second is numerical but within reasonable floating-point accuracy.

\end{abstract}

\section{Introduction}

This paper gives counterexamples to two optimization principles for lattices.

\subsection{Cassels' problem of critical determinants}

The first concerns Cartesian products.  For an origin-symmetric convex body
$\mathfrak S$, its critical determinant is
\[
\Delta(\mathfrak S)
=
\inf\left\{
\covol(\Lambda):
(\Lambda\setminus\{0\})\cap\operatorname{int}(\mathfrak S)=\varnothing
\right\},
\]
where $\covol$ denotes covolume and the infimum is over full-rank lattices.  If
$\mathfrak T=\mathfrak K\times\mathfrak L$, products of admissible lattices give
\begin{equation}
        \Delta(\mathfrak T)
        \le
        \Delta(\mathfrak K)\Delta(\mathfrak L).
\end{equation}
Cassels displayed this inequality and asked whether it can ever be strict
\cite[pp.~227--228]{Cassels1971}.

Zong later asked whether, for arbitrary convex bodies, the density of a
product always equals the product of the two densities, both for lattice
packings and for packings allowing translations but not rotations
\cite[Problem~1, p.~74]{Zong2005}.  He stated the universal identities as
questions while explicitly expressing belief in them for
two-dimensional factors \cite[Remark~2, p.~77]{Zong2005}. 
This four-dimensional question is still open up to the author's knowledge.

We answer Cassels' question and Zong's all-dimensional questions using
$B=B_2^4$, the Euclidean unit ball in $\R^4$.  Writing $\dL$ for maximal
lattice packing density, the product value is
\[
        \dL(B)^2=\frac{\pi^4}{256},
\]
by the optimality of $D_4$ in dimension four \cite{KZ1872}, whereas a suitable rotation of the even unimodular lattice $E_8\subset\R^8$
\cite[Chapter~4]{CS1999}
gives a lattice packing of $B\times B$
with density
\[
        \frac{\pi^4}{1024}
        \left(1+\frac1{\sqrt5}\right)^4
        =0.417282401491719\ldots .
\]
This is about $9.67\%$ greater than the product value.  Equivalently,
\[
        \Delta((2B)\times(2B))<\Delta(2B)^2.
\]
The rotated $E_8$ packing, together with a rigorous upper bound 
for the
unrestricted four-dimensional sphere-packing density
\cite{CDLS2022,CDLSData},
also disproves the
corresponding translative and unrestricted product formulas.

\subsection{A question about heights of flat tori}

The second is a question about heights of flat tori. It was raised by Sarnak
and formulated as a conjecture in Chiu \cite{Chiu1997}.
It concerns the height of a lattice $\Lambda \subseteq \mathbb{R}^{n}$ defined 
as 
\begin{equation}
    h(\Lambda)=\zeta'_{\Delta_{\mathbb{R}^n/\Lambda}}(0),
\end{equation}
where $\Delta_{\mathbb{R}^n/\Lambda}$ is the nonnegative Laplacian and
\[
\zeta_{\Delta_{\mathbb{R}^n/\Lambda}}(s)
=
\sum_{\lambda\in\operatorname{Spec}(\Delta_{\mathbb{R}^n/\Lambda})\setminus\{0\}}
\lambda^{-s}
\]
is its spectral zeta function, where $\operatorname{Spec}$ denotes the spectrum
and $\lambda$ ranges over the nonzero eigenvalues of the Laplacian.  It is
continued meromorphically to $s=0$.

The conjecture states that
among
unit-volume flat tori, height should be minimized by a lattice maximizing the
length of its shortest nonzero vector.  The recent solution of the
nine-dimensional lattice-packing problem identifies the Korkine-Zolotareff lattice
$\LamNine$, up to isometry, as the unique such lattice
\cite{DutourVanWoerden2025}.  We give an explicit one-parameter perturbation
$G_\varepsilon$ of a Gram matrix for $\LamNine$ that preserves determinant one
and for which numerical
evaluation gives
\[
 \left.\frac{\dd}{\dd\varepsilon}h(G_\varepsilon)
 \right|_{\varepsilon=0}
 =-0.2035396167325\ldots
\]
and
\[
 h(G_{10^{-3}})-h(G_0)
 =-2.02732672584\times10^{-4}.
\]
Thus the computation indicates that $\LamNine$ is not even stationary for
height.  

This leads to two interesting questions which we leave for the future.
\begin{question}
    \begin{enumerate}
        \item 
Is dimension 9 the smallest dimension where this conjecture is false?
\item  Which unit covolume lattice minimizes the height function $h(\cdot)$ in dimension 9?
    \end{enumerate}
\end{question}

\begin{remark}
The first counterexample is exact. The second uses ordinary
floating-point arithmetic up to (more than) sufficient accuracy.
We did not go to the level of interval arithmetic to 
rigorously certify it.

The ancillary files reproduce both calculations.
\end{remark}

\begin{remark}
  We rely on 
  \cite{CDLS2022} for the unrestricted version of Cassels' problem
  and 
  \cite{DutourVanWoerden2025} for our counterexample to the conjecture
  formulated in Chiu.
  At the time of writing, both \cite{CDLS2022} and \cite{DutourVanWoerden2025} are available only as preprints.
\end{remark}

\section{The product problems of Cassels and Zong}

For convex bodies $K_1\subset\R^{n_1}$ and $K_2\subset\R^{n_2}$, their Cartesian
product
\[
        K_1\times K_2\subset\R^{n_1+n_2}
\]
is again a convex body.  Products of lattice or periodic packings of the two
factors give packings of $K_1\times K_2$.
Thus
\begin{equation}
        \delta_\star(K_1\times K_2)
        \ge
        \delta_\star(K_1)\,\delta_\star(K_2),
\end{equation}
where $\delta_\star$ denotes the optimal packing density in the class $\star$,
for example lattice packings, translative packings, or unrestricted congruent
packings.  The three versions considered here are stated below.

Cassels posed the lattice-restricted question in the older language of lattice
constants, or critical determinants.  In Cassels' notation, $\Delta(\mathfrak S)$ is the
infimum of $\covol(\Lambda)$ over lattices $\Lambda$ admissible for a set
$\mathfrak S$, meaning that no nonzero point of $\Lambda$ lies in
$\operatorname{int}(\mathfrak S)$.  If $\Lambda$ attains this infimum, it is called
critical.  Cassels introduces these notions in the prologue and again in Chapter
III, Section 5 \cite[pp.~6, 80]{Cassels1971}.

\begin{question}[Cassels]
Can the product inequality be strict?  That is, can
\[
        \Delta(\mathfrak K\times\mathfrak L)
        <\Delta(\mathfrak K)\Delta(\mathfrak L)
\]
hold for some origin-symmetric convex bodies $\mathfrak K$ and $\mathfrak L$?
\end{question}

This is the same lattice-packing question expressed in terms of critical
determinants.
Indeed, if $K\subset\R^n$ is centrally symmetric, then a lattice packing of
translates of $K$ with center lattice $\Lambda$ is equivalent to the condition
\[
        (\Lambda\setminus\{0\})\cap \operatorname{int}(2K)=\varnothing.
\]
Thus, with Cassels' normalization,
\begin{equation}\label{eq:delta-Delta-relation}
        \dL(K)=\frac{\Vol(K)}{\Delta(2K)}.
\end{equation}
Consequently
\[
        \dL(K_1\times K_2)=\dL(K_1)\dL(K_2)
\]
is equivalent to
\[
        \Delta(2K_1\times 2K_2)=\Delta(2K_1)\Delta(2K_2).
\]
A positive answer to Cassels' question is therefore exactly the same as a
strict improvement over the lattice product-density formula.

The corresponding packing questions were later posed explicitly in density
language by Zong.  Writing $\dT$ for maximal translative packing density and
$\dL$ for maximal lattice packing density, his Problem~1 is:
\begin{question}[Zong]
Do
\[
        \dT(K_1\times K_2)=\dT(K_1)\dT(K_2)
\]
and
\[
        \dL(K_1\times K_2)=\dL(K_1)\dL(K_2)
\]
hold for all convex bodies $K_1,K_2$?
\end{question}
See \cite[Problem~1, p.~74]{Zong2005}.
In Remark~2, however, he writes that
``we may believe'' both identities when $K_1$ and $K_2$ are two-dimensional
convex domains \cite[Remark~2, p.~77]{Zong2005}.  

We also consider the natural unrestricted analogue, where $\dall$ denotes the
supremum over all congruent packings, allowing arbitrary Euclidean motions.
This gives a third variant of Zong's Problem~1:
\begin{question}[Unrestricted analogue]
Does
\[
        \dall(K_1\times K_2)
        =
        \dall(K_1)\dall(K_2)
\]
hold for all convex bodies $K_1,K_2$?
\end{question}

The example below disproves all three packing formulae with
\[
        K_1=K_2=B_2^4.
\]
It is worth emphasizing that the unrestricted conclusion does not require knowing
the exact value of $\dall(B_2^4)$.  The construction gives a lower bound for
$\dall(B_2^4\times B_2^4)$, while the known rigorous upper bound for
$\dall(B_2^4)$ is already small enough to force a strict violation.

\subsection{Density normalization}

Let $K\subset\R^n$ be a centrally symmetric convex body, and let $\|\cdot\|_K$
be its Minkowski norm.  For a lattice $\Lambda\subset\R^n$, put
\[
        m_K(\Lambda)=\min_{0\ne x\in\Lambda}\|x\|_K.
\]
Here $\covol(\Lambda)$ denotes the covolume of $\Lambda$.  The largest
scaled copies of $K$ that can be centered at $\Lambda$ have scale
$m_K(\Lambda)/2$.  Equivalently, the rescaled center lattice
$(2/m_K(\Lambda))\Lambda$ packs unit copies of $K$.  The resulting density,
which is unchanged by uniformly rescaling $\Lambda$, is
\begin{equation}\label{eq:lattice-density-formula}
        \delta_K(\Lambda)
        =
        \frac{\Vol(K)}{\covol(\Lambda)}
        \left(\frac{m_K(\Lambda)}{2}\right)^n .
\end{equation}
Here $\Vol$ denotes Euclidean volume.

For
\[
        K=B_2^4\times B_2^4\subset\R^4\oplus\R^4,
\]
the relevant norm is the block norm
\begin{equation}
        \|(x,y)\|_K=\max\{\|x\|_2,\|y\|_2\},
\end{equation}
and
\begin{equation}\label{eq:volume-K}
        \Vol(K)=\Vol(B_2^4)^2
        =\left(\frac{\pi^2}{2}\right)^2
        =\frac{\pi^4}{4}.
\end{equation}

\subsection{The product \texorpdfstring{$D_4\oplus D_4$}{D4 plus D4}}

Let
\[
        D_4=\left\{x\in\Z^4:\sum_{i=1}^4 x_i\equiv0\pmod2\right\}.
\]
Then $\covol(D_4)=2$ and the shortest nonzero Euclidean length in $D_4$ is
$\sqrt2$.  Hence, for the product lattice $D_4\oplus D_4$ in the coordinate
decomposition $\R^8=\R^4\oplus\R^4$,
\[
        m_K(D_4\oplus D_4)=\sqrt2,
        \qquad
        \covol(D_4\oplus D_4)=4.
\]
Thus the actual center lattice for unit copies of $K$ is
$\sqrt2(D_4\oplus D_4)$.
Using \eqref{eq:lattice-density-formula} and \eqref{eq:volume-K},
\begin{equation}\label{eq:D4-product-density}
        \delta_K(D_4\oplus D_4)
        =
        \frac{\pi^4}{4}\cdot\frac1{4}
        \left(\frac{\sqrt2}{2}\right)^8
        =
        \frac{\pi^4}{256}.
\end{equation}

Korkine and Zolotareff proved that $D_4$ is the optimal lattice sphere packing in
dimension $4$ \cite{KZ1872}.  In our normalization,
\begin{equation}\label{eq:D4-factor-density}
        \dL(B_2^4)=\delta_{B_2^4}(D_4)
        =
        \frac{\pi^2}{16},
\end{equation}
so \eqref{eq:D4-product-density} is exactly the product value
\[
        \dL(B_2^4)^2=\frac{\pi^4}{256}.
\]

\subsection{A rotation of \texorpdfstring{$E_8$}{E8}}

Use the standard coordinate realization \cite[Chapter~4]{CS1999}
\[
E_8=
\left\{x\in\Z^8:\sum_i x_i\equiv0\pmod2\right\}
\cup
\left\{x\in\left(\Z+\frac12\right)^8:\sum_i x_i\equiv0\pmod2\right\}.
\]
This lattice is unimodular, even, and has minimal squared Euclidean norm $2$.
Its $240$ minimal vectors, or roots, are
\[
        (\pm1,\pm1,0^6)
\]
with arbitrary positions and signs, together with
\[
        (\pm\tfrac12,\ldots,\pm\tfrac12)
\]
with an even number of minus signs.

Consider the symmetric integer matrix
\[
H=
\begin{pmatrix}
-1&0&0&1&0&1&-1&-1\\
0&-1&0&-1&0&-1&-1&-1\\
0&0&-1&1&0&-1&-1&1\\
1&-1&1&1&1&0&0&0\\
0&0&0&1&-1&-1&1&-1\\
1&-1&-1&0&-1&1&0&0\\
-1&-1&-1&0&1&0&1&0\\
-1&-1&1&0&-1&0&0&1
\end{pmatrix}.
\]
Writing $I$ for the identity matrix and $\operatorname{tr}$ for the trace, a
direct calculation gives
\begin{equation}
        H^2=5I,
        \qquad
        \operatorname{tr}H=0.
\end{equation}
Therefore the $+\sqrt5$ and $-\sqrt5$ eigenspaces of $H$ are two orthogonal
$4$-planes.  Let $U$ be the $+\sqrt5$ eigenspace and let $V=U^\perp$.  The
orthogonal projection $P$ onto $U$ is
\begin{equation}\label{eq:projection}
        P=\frac12\left(I+\frac1{\sqrt5}H\right),
        \qquad
        I-P=\frac12\left(I-\frac1{\sqrt5}H\right).
\end{equation}
Let $\pi_1$ and $\pi_2$ be the orthogonal projections onto the two blocks in
the standard decomposition $\R^8=\R^4\oplus\R^4$.  Choose a rotation
$Q\in\mathrm{SO}(8)$ such that
\[
        Q(U)=\R^4\oplus\{0\},
        \qquad
        Q(V)=\{0\}\oplus\R^4.
\]
Then $QE_8$ is a rotated copy of $E_8$, and
\begin{equation}\label{eq:rotation-projections}
        \pi_1Q=QP,
        \qquad
        \pi_2Q=Q(I-P).
\end{equation}

\begin{lemma}[root projections]\label{lem:root-projections}
For every root $r$ of $E_8$,
\[
        r^T H r=\pm2.
\]
Consequently
\[
        \|\pi_1Qr\|_2^2=1\pm\frac1{\sqrt5},
        \qquad
        \|\pi_2Qr\|_2^2=1\mp\frac1{\sqrt5}.
\]
\end{lemma}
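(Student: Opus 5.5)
The plan splits into the "consequently" part, which follows from the projection formula, and the identity $r^THr=\pm2$, which needs a case analysis over the two types of roots.

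\textbf{The consequence.} Since $Q$ is orthogonal, \eqref{eq:rotation-projections} gives $\|\pi_1Qr\|_2^2=\|QPr\|_2^2=\|Pr\|_2^2=r^TPr$, because $P$ is a symmetric idempotent. Using \eqref{eq:projection},
\[
r^TPr=\tfrac12\Bigl(r^Tr+\tfrac1{\sqrt5}\,r^THr\Bigr).
\]
With $r^Tr=2$ and $r^THr=\pm2$ this equals $1\pm1/\sqrt5$. The formula for $\|\pi_2Qr\|_2^2$ then follows from $I-P$ in the same way, or directly since the two squared norms sum to $\|r\|_2^2=2$.

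\textbf{First bound on $r^THr$.} For any vector $v$, $\|Hv\|^2=v^TH^2v=5\|v\|^2$. By Cauchy--Schwarz, $|r^THr|\le\|r\|\,\|Hr\|=\sqrt5\cdot2$. This alone does not pin down the value, so I will use integrality and parity on each root type.

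\textbf{Roots of type $(\pm1,\pm1,0^6)$.} Write $r=\epsilon_ie_i+\epsilon_je_j$. Then
\[
r^THr=H_{ii}+H_{jj}+2\epsilon_i\epsilon_jH_{ij}.
\]
Every diagonal entry of $H$ is $\pm1$, and every off-diagonal entry is in $\{0,\pm1\}$. Hence the value is $H_{ii}+H_{jj}\pm2H_{ij}$.
\begin{itemize}
\item If $H_{ij}=0$, we need $H_{ii}=H_{jj}$.
\item If $H_{ij}\neq0$, we need $H_{ii}=-H_{jj}$, so that the value is exactly $\pm2$ rather than $0$ or $\pm4$.
\end{itemize}
So the requirement is: for $i\ne j$, $H_{ij}\ne0$ if and only if $H_{ii}\ne H_{jj}$. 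The diagonal is $(-1,-1,-1,1,-1,1,1,1)$. Reading the rows, e.g.\ row 1 has nonzeros exactly in columns $4,6,7,8$, which are the indices with diagonal $+1$. This is a finite check of 28 pairs, which I will simply verify entry by entry.

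\textbf{Roots of type $\tfrac12(\pm1)^8$.} Write $r=\tfrac12 s$ with $s\in\{\pm1\}^8$ having an even number of minus signs. Then
\[
r^THr=\tfrac14\Bigl(\tr H+2\sum_{i<j}s_is_jH_{ij}\Bigr)=\tfrac12\sum_{i<j}s_is_jH_{ij},
\]
using $\tr H=0$. By the previous paragraph, the nonzero off-diagonal entries all lie in the $4\times4$ block pairing $A=\{1,2,3,5\}$ with $B=\{4,6,7,8\}$. So
\[
r^THr=\tfrac12\,a^TMb,
\]
where $M$ is a $4\times4$ $(\pm1)$-matrix, and $a,b$ are the sign vectors of $s$ restricted to $A$ and $B$.

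From $H^2=5I$, the rows of $M$ are mutually orthogonal, so $M$ is a $4\times4$ Hadamard matrix. Each row of $H$ has norm squared $1+4=5$, which is consistent. I need $a^TMb=\pm4$ whenever $\prod a\prod b=1$. The values of $a^TMb$ lie in $\{0,\pm4,\pm8,\pm16\}$ up to parity considerations, and Cauchy--Schwarz already gives $|a^TMb|\le\|a\|\,\|Mb\|=2\cdot4=8$. What remains is to rule out $0$ and $\pm8$ under the even-sign condition.

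\textbf{The main obstacle.} I expect the Hadamard step to be the hard part. My first attempt is structural: $Mb$ is a vector of four entries with $\|Mb\|^2=16$. I would aim to show that the even-parity constraint forces $Mb$ to have a specific shape, e.g.\ $\pm(2,\pm2,\pm2,\pm2)$ versus $(\pm4,0,0,0)$, and then pair it with $a$. The parity of $a^TMb$ relates to $\prod a\prod b$ through the determinant-like sign structure of $M$.

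Failing a clean argument, I will fall back on direct enumeration: $2^4$ choices of $b$ up to global sign, then the 8 choices of $a$ with the required parity. Alternatively, I can list all 128 half-integer roots modulo sign and evaluate $r^THr$ on each. This is finite and mechanical, as the ancillary files suggest.
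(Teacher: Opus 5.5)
Your proposal is correct. The ``consequently'' step is the same as in the paper. For $r^THr=\pm2$ your route is partly different from the paper's, which verifies all $240$ roots by exact computer enumeration.

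You handle the $112$ integer roots by hand via the zero pattern of $H$, and that pattern is right: with $A=\{1,2,3,5\}$, $B=\{4,6,7,8\}$ one has $H=\begin{pmatrix}-I&M\\ M^T&I\end{pmatrix}$ with $M\in\{\pm1\}^{4\times4}$. This gives a human-checkable explanation for those roots. For the $128$ half-integer roots, however, your structural sketch is unfinished. What you actually rely on there is the enumeration fallback, which is the paper's method.

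If you want to finish the structural route, the Hadamard property cannot do it. Substituting the Sylvester matrix for $M$ keeps $H^2=5I$, $\tr H=0$ and the zero pattern. Yet $a=(1,-1,-1,-1)$, $b=(-1,1,1,1)$ then give $a^TMb=8$, i.e.\ $r^THr=4$. The missing input is the sign pattern of the paper's $M$: all row products are $+1$ and all column products are $-1$.

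The row products give $(Mb)_i\equiv0\pmod4$ exactly when $\prod b=1$. In that case $\|Mb\|^2=16$ forces $Mb=\pm4e_k$, so $a^TMb=\pm4$ for every $a$. When $\prod b=-1$, $c=Mb/2$ lies in $\{\pm1\}^4$ and satisfies $M^Tc=2b$. Since each $(M^Tc)_k=2b_k\equiv2\pmod4$, one has $\prod_iM_{ik}c_i=-1$, and the column products $-1$ force $\prod c=1$. With $\prod a=-1$, the sum $a\cdot c$ then has an odd number of negative terms and equals $\pm2$.

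A shorter computer-free route uses the Cauchy--Schwarz bound you set aside. Check on the generators $e_i\pm e_j$ and $\tfrac12\mathbf 1$ that $\phi=\tfrac12(I+H)$ maps $E_8$ into itself. Then $\|\phi r\|^2=3+\tfrac12 r^THr\in2\Z$, so $r^THr\equiv2\pmod4$, and $|r^THr|\le2\sqrt5<6$ leaves only $\pm2$.

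Minor points: up to sign there are $8$ choices of $b$ and $64$ half-integer roots. Also, your list $\{0,\pm4,\pm8,\pm16\}$ for $a^TMb$ presupposes the shape analysis of $Mb$; a priori only evenness is available.
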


\begin{proof}
The finite assertion $r^T H r=\pm2$ is checked exactly on the $240$ roots; see
\path{anc/cassels/verify_e8_roots.py}.  Given this assertion, the norm
identities are immediate from \eqref{eq:projection} and $\|r\|_2^2=2$:
\[
        \|Pr\|_2^2
        =r^TPr
        =\frac12\left(r^Tr+\frac1{\sqrt5}r^T H r\right)
        =1+\frac{r^T H r}{2\sqrt5}.
\]
The formula for $I-P$ is the same calculation with the opposite sign, and
\eqref{eq:rotation-projections} transfers these norms to the coordinate
projections of $Qr$.
\end{proof}

Set
\begin{equation}\label{eq:alpha-def}
        \alphaE=1+\frac1{\sqrt5}.
\end{equation}

\begin{lemma}[the $K$-minimum of the rotated $E_8$]\label{lem:E8-K-minimum}
For $K=B_2^4\times B_2^4$ in the standard coordinate decomposition,
\[
        m_K(QE_8)^2=\alphaE=1+\frac1{\sqrt5}.
\]
\end{lemma}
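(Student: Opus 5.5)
We need to show that $\min_{0\neq x\in E_8}\max\{\|Px\|^2,\|(I-P)x\|^2\} = \alpha$. Since $Q$ is orthogonal, \eqref{eq:rotation-projections} gives $\|\pi_1 Qx\|=\|Px\|$ and $\|\pi_2 Qx\|=\|(I-P)x\|$, so the whole computation can be done inside $E_8$ with $P$ and $H$.

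\emph{Upper bound.} Take any root $r$. By Lemma~\ref{lem:root-projections}, one of the two block norms of $Qr$ has squared length $1+1/\sqrt5=\alpha$ and the other has $1-1/\sqrt5<\alpha$. Hence $\|Qr\|_K^2=\alpha$, and $m_K(QE_8)^2\le\alpha$.

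\emph{Lower bound.} Let $x\in E_8$ be nonzero, and write $a=\|Px\|^2$ and $b=\|(I-P)x\|^2$. Then
\[
a+b=x^Tx=:N,\qquad a-b=\tfrac1{\sqrt5}\,x^THx .
\]
Since $E_8$ is even, $N\in\{2,4,6,\dots\}$. Since $H$ has integer entries, $t:=x^THx$ is an integer when $x\in\Z^8$. In the half-integer coset, write $x=y/2$ with $y$ having odd coordinates. Then $t=\tfrac14 y^THy$, and one must check that this is still an integer. The route I would try: $E_8$ is generated by its roots, so by bilinearity it suffices to show that $r^THs$ is an integer for all roots $r,s$. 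This is a finite exact check of the kind done in \texttt{anc/cassels}. Alternatively, one can derive it from $r^THr=\pm2$ by polarization, which gives $2r^THs\in 2\Z$ provided the diagonal values are even. In either case we also want $t\equiv N\pmod 2$ as a parity bonus, though it may not be needed.

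Given $t\in\Z$, we have
\[
\max\{a,b\}=\tfrac12\bigl(N+|t|/\sqrt5\bigr).
\]
Also $|t|\le\sqrt5\,N$, because $H$ has operator norm $\sqrt5$. Now split into cases. If $N\ge4$, then $\max\{a,b\}\ge N/2\ge2>\alpha$. If $N=2$, then $x$ is a root and Lemma~\ref{lem:root-projections} gives exactly $\alpha$. So in fact integrality of $t$ is not needed at all: the bound $\max\{a,b\}\ge N/2$ disposes of every non-root vector, because $\alpha<2$.

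So the proof reduces to three ingredients: orthogonality of $Q$, the inequality $\max\{a,b\}\ge(a+b)/2=N/2\ge2$ for $N\ge4$, and Lemma~\ref{lem:root-projections} for $N=2$. I do not expect any real obstacle. The only point needing care is the observation that every lattice vector of $E_8$ that is not a root has squared norm at least $4$, and this follows directly from evenness.
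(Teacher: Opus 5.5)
Your proposal is correct and follows essentially the same route as the paper's proof. Roots give exactly $\alpha$ by Lemma~\ref{lem:root-projections}, and every non-root has $\|x\|_2^2\ge4$ by evenness, so $\max\{\|Px\|_2^2,\|(I-P)x\|_2^2\}\ge2>\alpha$. As you yourself observe, the digression on integrality of $x^THx$ and the polarization remark play no role and can simply be dropped.
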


\begin{proof}
For every root $r$ of $E_8$, Lemma~\ref{lem:root-projections} gives
\[
        \max\{\|\pi_1Qr\|_2^2,\|\pi_2Qr\|_2^2\}
        =1+\frac1{\sqrt5}=\alphaE.
\]
Thus $m_K(QE_8)^2\le\alphaE$.

Conversely, if $0\ne x\in E_8$ is not a root, then $\|x\|_2^2\ge4$, because
$E_8$ is even and has minimal norm $2$.  Since $P$ and $I-P$ are complementary orthogonal
projections,
\[
        \|Px\|_2^2+\|(I-P)x\|_2^2=\|x\|_2^2.
\]
Hence one of the two squared block lengths is at least $2$, and
\[
        \max\{\|\pi_1Qx\|_2^2,\|\pi_2Qx\|_2^2\}
        =\max\{\|Px\|_2^2,\|(I-P)x\|_2^2\}
        \ge2>1+\frac1{\sqrt5}.
\]
Therefore the minimum is attained on roots and has squared value $\alphaE$.
\end{proof}

\begin{theorem}[density of the rotated $E_8$ packing]\label{thm:rotated-E8-density}
The center lattice $(2/\sqrt{\alphaE})QE_8$ gives a lattice packing of
$B_2^4\times B_2^4$ of density
\[
        \delta_K(QE_8)
        =
        \frac{\pi^4}{1024}\left(1+\frac1{\sqrt5}\right)^4
        =0.417282401491719\ldots .
\]
\end{theorem}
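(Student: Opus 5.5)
The plan is to substitute directly into the density formula \eqref{eq:lattice-density-formula}, using Lemma~\ref{lem:E8-K-minimum} for the $K$-minimum. Three inputs are needed: the volume $\Vol(K)=\pi^4/4$ from \eqref{eq:volume-K}, the covolume $\covol(QE_8)=\covol(E_8)=1$, and $m_K(QE_8)=\sqrt{\alphaE}$. The covolume holds because $E_8$ is unimodular and $Q\in\mathrm{SO}(8)$ preserves volume. With $n=8$ these give
\[
        \delta_K(QE_8)=\frac{\pi^4}{4}\cdot\frac11\left(\frac{\sqrt{\alphaE}}{2}\right)^8
        =\frac{\pi^4}{4}\cdot\frac{\alphaE^4}{256}
        =\frac{\pi^4}{1024}\left(1+\frac1{\sqrt5}\right)^4 .
\]

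Next I will justify that the stated center lattice $(2/\sqrt{\alphaE})QE_8$ really packs unit copies of $K$. As explained before \eqref{eq:lattice-density-formula}, the rescaled lattice $(2/m_K(\Lambda))\Lambda$ has $K$-minimum exactly $2$. For any two distinct points $u,v$ of this lattice we then have $\|u-v\|_K\ge2$. If $z$ lay in both $\operatorname{int}(u+K)$ and $\operatorname{int}(v+K)$, the triangle inequality for the norm $\|\cdot\|_K$ would give $\|u-v\|_K<2$, a contradiction. So the translates have disjoint interiors. The density of this packing is $\Vol(K)/\covol$ of the rescaled lattice, and this equals the formula above.

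The only remaining step is the numerical value. I would expand $(1+1/\sqrt5)^4$ exactly:
\[
(1+5^{-1/2})^2=\tfrac{6}{5}+\tfrac{2}{\sqrt5},
\qquad
(1+5^{-1/2})^4=\tfrac{56}{25}+\tfrac{24}{5\sqrt5}.
\]
This gives the closed form $\tfrac{\pi^4}{1024}\bigl(\tfrac{56}{25}+\tfrac{24}{5\sqrt5}\bigr)$, which I would evaluate to the digits quoted. No step is a genuine obstacle, since the substantive content is already in Lemma~\ref{lem:E8-K-minimum}. The points needing the most care are the bookkeeping: keeping squared versus unsquared minima straight, so that $m_K^8=\alphaE^4$, and recording that rotation preserves covolume.
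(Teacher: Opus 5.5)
Your proposal is correct and follows the paper's proof: you substitute $\Vol(K)=\pi^4/4$, $\covol(QE_8)=1$ and $m_K(QE_8)^2=\alphaE$ from Lemma~\ref{lem:E8-K-minimum} into the density formula \eqref{eq:lattice-density-formula} to obtain $\frac{\pi^4}{1024}\alphaE^4$. Your triangle-inequality check that $(2/\sqrt{\alphaE})QE_8$ really packs unit copies of $K$, and your expansion $(1+1/\sqrt5)^4=\frac{56}{25}+\frac{24}{5\sqrt5}$, are both correct; the paper leaves these steps implicit.
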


\begin{proof}
Rotation preserves covolume, so $\covol(QE_8)=\covol(E_8)=1$.  By
\eqref{eq:lattice-density-formula}, \eqref{eq:volume-K}, and
Lemma~\ref{lem:E8-K-minimum},
\[
        \delta_K(QE_8)
        =
        \frac{\pi^4}{4}
        \left(\frac{\sqrt{\alphaE}}{2}\right)^8
        =
        \frac{\pi^4}{1024}\alphaE^4.
\]
Substituting \eqref{eq:alpha-def} gives the stated expression.
\end{proof}

\subsection{The restricted counterexample}

\begin{theorem}[failure of the lattice-restricted product formula]
For $K_1=K_2=B_2^4$,
\[
        \dL(K_1\times K_2)>
        \dL(K_1)\dL(K_2).
\]
More precisely,
\[
        \dL(B_2^4\times B_2^4)
        \ge
        \frac{\pi^4}{1024}\left(1+\frac1{\sqrt5}\right)^4
        >
        \frac{\pi^4}{256}
        =\dL(B_2^4)^2.
\]
\end{theorem}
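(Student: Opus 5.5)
The argument is a short chain of three facts, all already in place earlier in the paper.

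First, the lower bound. By definition $\dL(K_1\times K_2)$ is the supremum of $\delta_K(\Lambda)$ over all full-rank lattices $\Lambda\subset\R^8$, where $K=B_2^4\times B_2^4$. The rotated lattice $QE_8$ is one such lattice, and by Theorem~\ref{thm:rotated-E8-density} the rescaled center lattice $(2/\sqrt{\alphaE})QE_8$ is a genuine lattice packing of $K$. Hence
\[
\dL(B_2^4\times B_2^4)\ge \delta_K(QE_8)=\frac{\pi^4}{1024}\alphaE^4 .
\]

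Second, the identification of the right-hand side. By the Korkine--Zolotareff theorem in the normalization \eqref{eq:D4-factor-density}, we have $\dL(B_2^4)=\pi^2/16$. Squaring gives $\dL(B_2^4)^2=\pi^4/256$, which also agrees with \eqref{eq:D4-product-density}.

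Third, the strict inequality. This reduces to the elementary claim
\[
\frac{\pi^4}{1024}\alphaE^4>\frac{\pi^4}{256},
\qquad\text{that is,}\qquad
\alphaE^4>4 .
\]
Since $\alphaE>0$, this is equivalent to $\alphaE^2>2$, which is $1+2/\sqrt5+1/5>2$, or $2/\sqrt5>4/5$. That last inequality is the same as $\sqrt5<5/2$, i.e.\ $5<25/4$, which holds. As a numerical cross-check, $\alphaE^4\approx4.387$, so the density ratio is about $1.0967$, matching the stated $9.67\%$ improvement.

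There is no real obstacle here. The only nontrivial inputs are the exact $K$-minimum computed in Lemma~\ref{lem:E8-K-minimum} and the cited optimality of $D_4$. The one point of care is that $\dL(B_2^4)$ must be an exact value, not just a bound. This is exactly what \cite{KZ1872} supplies, and it is what makes the right-hand equality $\frac{\pi^4}{256}=\dL(B_2^4)^2$ legitimate rather than merely a lower bound.
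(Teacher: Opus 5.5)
Your proposal is correct and follows the paper's proof: the lower bound comes from the rotated $E_8$ density theorem, the value $\pi^4/256$ comes from the Korkine--Zolotareff optimality of $D_4$, and the strict inequality reduces to $(1+1/\sqrt5)^2>2$, i.e.\ $2/\sqrt5>4/5$. Your remark that the exact value of $\dL(B_2^4)$, not just a lower bound, is what makes the final equality legitimate is also accurate.
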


\begin{proof}
The lower bound is Theorem~\ref{thm:rotated-E8-density}.  The product value is
\eqref{eq:D4-factor-density}.  It remains only to compare the two explicit
numbers.  The ratio is
\[
        \frac{\frac{\pi^4}{1024}(1+1/\sqrt5)^4}{\frac{\pi^4}{256}}
        =
        \frac{(1+1/\sqrt5)^4}{4}.
\]
This is strictly greater than one: indeed
$(1+1/\sqrt5)^2>2$, since $2/\sqrt5>4/5$.  Numerically, the
ratio is $1.096656314599949\ldots$.
Thus the rotated $E_8$ lattice packing is about $9.67\%$ denser than the product
lattice value $D_4\oplus D_4$.
\end{proof}

\begin{corollary}[Cassels' critical-determinant form]
Let
\[
        \mathfrak S=2B_2^4.
\]
Then Cassels' product inequality is strict:
\[
        \Delta(\mathfrak S\times\mathfrak S)
        <
        \Delta(\mathfrak S)^2.
\]
More explicitly,
\[
        \Delta(\mathfrak S\times\mathfrak S)
        \le
        \frac{256}{(1+1/\sqrt5)^4}
        =58.359213\ldots
        <64
        =\Delta(\mathfrak S)^2.
\]
\end{corollary}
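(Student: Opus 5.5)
The plan is to translate the density statements already proved into critical determinants using \eqref{eq:delta-Delta-relation}, and to compute each side separately. For the right-hand side, take $K=B_2^4$ in \eqref{eq:delta-Delta-relation}. This gives
\[
\Delta(\mathfrak S)=\Delta(2B_2^4)=\frac{\Vol(B_2^4)}{\dL(B_2^4)}.
\]
By \eqref{eq:D4-factor-density}, which rests on the Korkine--Zolotareff optimality of $D_4$, this equals
\[
\frac{\pi^2/2}{\pi^2/16}=8,
\]
so $\Delta(\mathfrak S)^2=64$. As a sanity check, I would confirm this directly. A lattice is admissible for $2B_2^4$ exactly when its minimal norm is at least $2$. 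The scaled lattice $\sqrt2 D_4$ has covolume $4\cdot 2=8$, matching the value above.

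For the left-hand side, apply \eqref{eq:delta-Delta-relation} to $K=B_2^4\times B_2^4$. Since $2K=\mathfrak S\times\mathfrak S$, we get
\[
\Delta(\mathfrak S\times\mathfrak S)=\frac{\Vol(K)}{\dL(K)}.
\]
Rather than using the abstract relation, I would exhibit the admissible lattice explicitly. By Lemma~\ref{lem:E8-K-minimum}, every nonzero $x\in QE_8$ has $\|x\|_K\ge\sqrt{\alphaE}$. Hence $\Lambda=(2/\sqrt{\alphaE})QE_8$ satisfies $\|x\|_K\ge 2$ for all nonzero $x\in\Lambda$. This means no nonzero point of $\Lambda$ lies in the interior of $2K$, so $\Lambda$ is admissible for $\mathfrak S\times\mathfrak S$. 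Its covolume is
\[
(2/\sqrt{\alphaE})^8\covol(E_8)=\frac{256}{\alphaE^4}.
\]
The same value also follows from Theorem~\ref{thm:rotated-E8-density} via $\Vol(K)/\delta_K(QE_8)$. Therefore
\[
\Delta(\mathfrak S\times\mathfrak S)\le \frac{256}{(1+1/\sqrt5)^4}.
\]

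Finally, compare the two numbers. The inequality $256/\alphaE^4<64$ is equivalent to $\alphaE^4>4$, that is, $\alphaE^2>2$. Expanding, $\alphaE^2=1+2/\sqrt5+1/5$, so the condition becomes $2/\sqrt5>4/5$, i.e.\ $\sqrt5<5/2$, which holds. This is the same comparison as in the preceding theorem. The numerical value $58.359\ldots$ follows by direct evaluation.

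No step is a genuine obstacle. The only point needing care is the normalization: Cassels' $\Delta$ uses the body $2K$ rather than $K$, so the factor $2$ must be tracked consistently when computing $\Delta(\mathfrak S)=8$.
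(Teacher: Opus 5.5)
Your proposal is correct and follows essentially the paper's proof: you get $\Delta(2B_2^4)=8$ from the relation $\dL(K)=\Vol(K)/\Delta(2K)$ together with the optimality of $D_4$, bound $\Delta(\mathfrak S\times\mathfrak S)$ by $256/(1+1/\sqrt5)^4$ using the rotated $E_8$, and reduce the comparison to $2/\sqrt5>4/5$. The only cosmetic difference is that you check admissibility of $(2/\sqrt{\alphaE})QE_8$ directly from Lemma~\ref{lem:E8-K-minimum} and compute its covolume, rather than writing the bound as $\Vol(K)/\delta_K(QE_8)$; this is the same computation.
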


\begin{proof}
For a centrally symmetric body $K$, the relation \eqref{eq:delta-Delta-relation}
gives $\dL(K)=\Vol(K)/\Delta(2K)$.  Taking $K=B_2^4$ and using
$\dL(B_2^4)=\pi^2/16$ gives
\[
        \Delta(2B_2^4)=\frac{\Vol(B_2^4)}{\dL(B_2^4)}
        =\frac{\pi^2/2}{\pi^2/16}=8.
\]
Thus $\Delta(\mathfrak S)^2=64$.  On the other hand, the rotated $E_8$ packing
of $B_2^4\times B_2^4$ from Theorem~\ref{thm:rotated-E8-density} gives
\[
        \Delta(2(B_2^4\times B_2^4))
        \le
        \frac{\Vol(B_2^4\times B_2^4)}{\delta_K(QE_8)}
        =
        \frac{\pi^4/4}{(\pi^4/1024)(1+1/\sqrt5)^4}
        =
        \frac{256}{(1+1/\sqrt5)^4}.
\]
Since $2(B_2^4\times B_2^4)=(2B_2^4)\times(2B_2^4)=\mathfrak S\times\mathfrak S$,
the displayed inequality proves the claim.
\end{proof}

\subsection{The unrestricted counterexample}

Now let $\dall$ denote unrestricted packing density, allowing arbitrary congruent
copies.  The rotated $E_8$ construction is a lattice packing by translates of one
fixed orientation of $B_2^4\times B_2^4$, and is therefore also an admissible
unrestricted packing.  Hence
\begin{equation}\label{eq:unrestricted-product-lower}
        \dall(B_2^4\times B_2^4)
        \ge
        \frac{\pi^4}{1024}\left(1+\frac1{\sqrt5}\right)^4
        =0.417282401491719\ldots .
\end{equation}

For the factor $B_2^4$, the exact unrestricted sphere-packing density is not
needed here.  Cohn's sphere-packing data table lists the density
\[
        0.6168502750680849\ldots
\]
coming from $D_4$ \cite{CohnTable}.  Cohn--de Laat--Salmon rigorously certified by Arb interval
arithmetic that the unrestricted density is at most $0.636107333$
\cite{CDLS2022,CDLSData}.  Their
archived verifier gives the upper endpoint
$0.63610733215513285089\ldots$, which Cohn's table records with outward
rounding as
\begin{equation}\label{eq:four-dimensional-upper-bound}
        \dall(B_2^4)
        \le
        0.6361073321551329.
\end{equation}
See \cite{CDLS2022,CDLSData,CohnTable}.

For a comparison that does not depend on rounded display digits, use
$\pi>3.14$ and $\sqrt5<25/11$, the latter of which implies
$1+1/\sqrt5>1.44$.  Then \eqref{eq:unrestricted-product-lower} is greater
than
\[
        \frac{3.14^4\,1.44^4}{1024}
        =0.4081958678\ldots>0.408.
\]
On the other hand, \eqref{eq:four-dimensional-upper-bound} is less than
$0.637$, and $0.637^2=0.405769<0.408$.  Consequently
\[
        \dall(B_2^4\times B_2^4)>\dall(B_2^4)^2.
\]
Thus:

\begin{theorem}[failure of the unrestricted product formula]
For $K_1=K_2=B_2^4$,
\[
        \dall(K_1\times K_2)>
        \dall(K_1)\dall(K_2).
\]
\end{theorem}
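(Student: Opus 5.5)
The plan is to sandwich the two sides between explicit rational numbers, using only results already established above. For the left-hand side, I will take the lower bound \eqref{eq:unrestricted-product-lower}, which comes from Theorem~\ref{thm:rotated-E8-density}. The rotated $E_8$ center lattice $(2/\sqrt{\alphaE})QE_8$ packs translates of one fixed copy of $B_2^4\times B_2^4$. Any lattice packing is a special case of a congruent packing, so the unrestricted supremum $\dall(B_2^4\times B_2^4)$ is at least $\frac{\pi^4}{1024}\alphaE^4$. For the right-hand side, I will not need the exact value of $\dall(B_2^4)$. The certified upper bound \eqref{eq:four-dimensional-upper-bound} of Cohn--de Laat--Salmon is enough, and it gives
\[
\dall(K_1)\dall(K_2)=\dall(B_2^4)^2\le 0.6361073321551329^2 .
\]

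The key step is a rigorous comparison of $\frac{\pi^4}{1024}(1+1/\sqrt5)^4$ with $0.637^2$, done without relying on displayed floating-point digits. I will bound the factors from below by rationals. From $\pi>3.14$ and $\sqrt5<25/11$ (equivalently $121\cdot 5=605<625$) we get $1/\sqrt5>11/25=0.44$, so $\alphaE>1.44$. Since $x\mapsto x^4$ is monotone on positive reals, the lower bound exceeds $3.14^4\cdot1.44^4/1024$. I will evaluate this product, for instance as $(3.14\cdot1.44)^4/1024=4.5216^4/1024$, and check that it exceeds $0.408$; a crude bound such as $4.52^2>20.43$ and $20.43^2>417.3$, then $417.3/1024>0.4075$, may need slight tightening to clear $0.408$. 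On the other side, $0.6361073\ldots<0.637$ gives $\dall(B_2^4)^2<0.637^2=0.405769<0.408$. Chaining the inequalities proves the strict inequality of the theorem.

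The only real obstacle is logical rather than computational. The unrestricted statement rests entirely on the external certified bound \eqref{eq:four-dimensional-upper-bound}, and the margin is thin: about $0.408$ against $0.4058$. So I would take care that every rounding goes in the safe direction. The lower bounds on $\pi$ and $\alphaE$ must be taken downward, and the bound on $\dall(B_2^4)$ must be rounded upward. If the margin with $3.14$ were too thin, I would switch to $\pi>3.1415$ to gain room.
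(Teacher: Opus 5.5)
Your proposal is correct and follows the paper's proof essentially step for step: the rotated $E_8$ lattice packing as a lower bound for the unrestricted density of the product, the Cohn--de Laat--Salmon upper bound for the factor, and the same rational estimates $\pi>3.14$, $\sqrt5<25/11$, giving $3.14^4\cdot 1.44^4/1024>0.408>0.405769=0.637^2$. No tightening is actually needed: your crude estimate $417.3/1024>0.4075$ already exceeds $0.637^2=0.405769$, so clearing $0.408$ is only a convenience.
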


\begin{remark}[translative density]
The same argument also disproves the intermediate translative product formula
\[
        \dT(K_1\times K_2)\stackrel{?}{=}\dT(K_1)\dT(K_2)
\]
for $K_1=K_2=B_2^4$.  Indeed, the rotated $E_8$ packing is translative, and for a
Euclidean ball translative and unrestricted packings have the same factor density
because rotations do not change the body.
\end{remark}

\section{A question about heights of flat tori}

\subsection{Flat tori, packing, and height}

Let $L=C\Z^n\subset\R^n$ be a lattice with Gram matrix
$G=C^{\mathsf T}C$.  The associated flat torus is
\[
        T_L=\R^n/L,
        \qquad
        \Vol(T_L)=\sqrt{\det G}.
\]
We normalize throughout this section by $\det G=1$.  The squared first minimum
is
\[
        \lambda_1(L)^2
        =\min_{m\in\Z^n\setminus\{0\}}m^{\mathsf T}Gm.
\]
At fixed covolume, maximizing $\lambda_1(L)$ is equivalent to maximizing the
density of the associated lattice sphere packing.

Let $\Delta_G$ denote the nonnegative Laplacian on $T_L$.  Its spectral zeta
function, initially defined for $\Re s>n/2$, is
\[
        \zeta_{\Delta_G}(s)
        =\sum_{\lambda\in\operatorname{Spec}(\Delta_G)\setminus\{0\}}
        \lambda^{-s}.
\]
It has a meromorphic continuation that is regular at $s=0$.  The
zeta-regularized determinant and the height are
\[
        \det{}'_{\zeta}\Delta_G
        =\exp\bigl(-\zeta'_{\Delta_G}(0)\bigr),
        \qquad
        h(G)=\zeta'_{\Delta_G}(0).
\]
Thus minimizing height is equivalent to maximizing the regularized determinant.

Chiu formulates the question as follows
\cite[Conjecture~4.5]{Chiu1997}.

\begin{conjecture}[Question about heights of flat tori]
Among all $n$-dimensional flat tori of volume one, the height is minimized by a
torus corresponding to a lattice maximizing the length of its shortest
nonzero vector.
\end{conjecture}

The conjecture is known in dimension two, where the hexagonal torus is the
global minimizer, and in dimension three, where the face-centered cubic torus
is the unique global minimizer \cite{OPS1988,SarnakStrombergsson2006}.
In dimensions eight and twenty-four, universal optimality of the $E_8$ and
Leech lattices implies both their sphere-packing optimality and the
height-minimizing property, and hence gives an affirmative answer in those
dimensions \cite[Corollary 1.5]{CKMRV2019}.
\subsection{The densest lattice in dimension nine}

Dutour Sikiri\'c and van Woerden recently completed Voronoi's algorithm in
dimension nine, enumerating $2{,}237{,}251{,}040$ perfect forms and
$7{,}338{,}582$ extreme forms \cite{DutourVanWoerden2025}.
The lattice $\LamNine$ is called the Korkine-Zolotareff lattice
\cite[p.~xvii]{CS1999}.

\begin{theorem}[Dutour Sikiri\'c--van Woerden]
Up to similarity, the Korkine-Zolotareff lattice $\LamNine$ is the unique densest
nine-dimensional lattice packing.  With determinant normalized to one, its
shortest nonzero vectors have squared length $2$.
\cite{DutourVanWoerden2025}
\end{theorem}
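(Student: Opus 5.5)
This statement is a cited result: it is the outcome of the complete enumeration of perfect forms by Dutour Sikiri\'c and van Woerden \cite{DutourVanWoerden2025}. The plan is therefore to reduce it to their enumeration plus Voronoi's theory, not to reprove it from scratch.

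The first step is Voronoi's theorem. The lattice packing density $\lambda_1(L)^2/(\det G)^{1/n}$ attains its local maxima on the space of positive definite forms exactly at the extreme forms, meaning forms that are both perfect and eutactic. Every perfect form is reached by Voronoi's algorithm, which walks the neighbour graph of perfect domains starting from $A_9$. Hence a global maximizer exists (by Mahler compactness) and lies among the finitely many extreme classes, and it suffices to compare the Hermite invariant over the $7{,}338{,}582$ extreme forms from the enumeration. This comparison is a finite exact computation over integral Gram matrices, and I take it as the output of \cite{DutourVanWoerden2025}: only one class attains the maximum, namely the class of $\LamNine$. Uniqueness up to similarity follows because equal Hermite invariant at a global maximum forces the form to be extreme, and the enumeration shows the maximal class is unique.

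The second step is the normalization claim, which I would check directly. $\LamNine$ has determinant $2$ when scaled to minimal norm $4$ in the Conway--Sloane normalization, which makes its Hermite invariant $\gamma=4/2^{1/9}\cdot$(appropriate scaling). Concretely, \cite{CS1999} gives the center density $1/(16\sqrt2)$, i.e.\ $\delta_c=\rho^9/\sqrt{\det}=2^{-9/2}$ with $\rho=\lambda_1/2$. Rescaling to $\det=1$ then gives $(\lambda_1/2)^9=2^{-9/2}$. Hence $\lambda_1^2=4\cdot 2^{-1}=2$, which is the stated value, and it can be confirmed from any explicit Gram matrix. This Gram matrix can be built from $D_9$ glued with one half-integral coset, or as the laminated lattice over $E_8$.

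The main obstacle is the correctness and completeness of the massive enumeration itself: the isomorphism testing among two billion perfect forms and the completeness of the neighbour search. Within this paper we can only cite it, noting that it was verified by the authors' certified invariants.
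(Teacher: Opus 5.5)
Your plan agrees with the paper, which also proves nothing beyond the citation to \cite{DutourVanWoerden2025}. Its only supporting computation is the explicit Gram matrix $G_{\LamNine}$ of determinant $2^9$ and minimum $4$, so that $G_0=\frac12G_{\LamNine}$ has determinant $1$ and minimum $2$; you instead use the tabulated center density $2^{-9/2}$, and that computation is correct.

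Two side remarks need fixing. First, at minimal norm $4$ the determinant of $\LamNine$ is $512=2^9$, not $2$; this gives $\gamma=4/512^{1/9}=2$ immediately. Second, the $D_9$ construction needs a glue vector such as $((\tfrac12)^8,0)$, because $D_9\cup\bigl(D_9+(\tfrac12)^9\bigr)$ is not a lattice: it is the non-lattice packing $D_9^+$, which has the same density.
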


We use the Gram matrix
\begin{equation}
G_{\LamNine}=
\begin{pmatrix}
4&-2&0&0&0&0&0&0&0\\
-2&4&-2&2&0&0&0&0&0\\
0&-2&4&0&0&2&0&0&0\\
0&2&0&4&2&2&0&0&0\\
0&0&0&2&4&2&0&0&2\\
0&0&2&2&2&4&2&2&1\\
0&0&0&0&0&2&4&2&0\\
0&0&0&0&0&2&2&4&0\\
0&0&0&0&2&1&0&0&4
\end{pmatrix}.
\end{equation}
It has determinant $512=2^9$ and squared minimum $4$.  Therefore
\begin{equation}
        G_0=\frac12G_{\LamNine}
\end{equation}
has determinant one and squared minimum $2$, and so represents, up to
isometry, the unique densest unit-covolume lattice in dimension nine.  
The shortest nonzero vectors have squared length $2$ in this lattice and $1$
in its dual; this difference motivates the perturbation below.

\subsection{An explicit perturbation that lowers height}

Let
\begin{equation}
        A=\frac1{\sqrt{3420}}M,
\end{equation}
where
\[
M=
\begin{pmatrix}
4&0&0&0&0&0&0&0&9\\
0&4&0&0&0&0&0&0&18\\
0&0&4&0&0&0&0&0&9\\
0&0&0&4&0&0&0&0&-18\\
0&0&0&0&4&0&0&0&18\\
0&0&0&0&0&4&0&0&0\\
0&0&0&0&0&0&4&0&0\\
0&0&0&0&0&0&0&4&0\\
9&18&9&-18&18&0&0&0&-32
\end{pmatrix}.
\]
Then $A$ is symmetric, $\tr A=0$, and its Frobenius norm
$\lVert A\rVert_{\mathrm F}$ is $1$.
Define
\begin{equation}\label{eq:deformation}
        G_\varepsilon
        =e^{\varepsilon A/2}G_0e^{\varepsilon A/2}.
\end{equation}
Because $\det G_0=1$ and $\tr A=0$, this path satisfies
$\det G_\varepsilon=1$ identically.

Numerical evaluation of the height and its derivative along this path gives
\begin{equation}
 \left.\frac{\dd}{\dd\varepsilon}h(G_\varepsilon)
 \right|_{\varepsilon=0}
 =-0.2035396167325\ldots.
\end{equation}
This value is obtained by differentiating the height formula at
$\varepsilon=0$ and numerically summing over nonzero vectors in the lattice
and its dual, using increasing cutoffs to check convergence.

\subsection{The numerical counterexample}

At the finite parameter value $\varepsilon=10^{-3}$, the same computation
gives the following strict decrease.

\paragraph{Numerical finding.}
For the path \eqref{eq:deformation}, numerical evaluation gives
\[
        h(G_{10^{-3}})=1.964861158478344\ldots
        <1.965063891150928\ldots=h(G_0).
\]
The computed negative derivative provides numerical evidence that arbitrarily
small determinant-preserving changes can lower the height of $\LamNine$, so it
is not a local minimizer among determinant-one Gram matrices.  Since
$\LamNine$ is the unique densest lattice in
dimension nine, this reproducible numerical computation shows that the
conjecture fails.

The squared length of the shortest nonzero vector in the original lattice
decreases from $2$ to $1.997523906180598\ldots$, while that in the dual lattice
increases from $1$ to $1.001626549537378\ldots$.  Thus the perturbation slightly
worsens the packing quality of the original lattice while increasing the
shortest-vector length in the dual.  Height depends on vectors of all lengths in both
lattices, whereas packing density depends only on the shortest vectors in the
original lattice.

\section*{Ancillary files}

Verification scripts and reproduction instructions are in the \texttt{anc}
directory.  The rotated $E_8$ check is exact; the $\Lambda_9$ height comparison
retains the floating-point inaccuracies that could be made rigorous.

\setcounter{secnumdepth}{0}

\section{Acknowledgements}

\subsection{AI interaction}

The counterexamples were found using ChatGPT 5.5 Pro in various chat sessions.
The final paper is partly the author's own writing and partly written with the help of Codex.

For Cassels' problem, the author was expecting to find a counterexample using the lattice $E_8 \times E_8 \times E_8 \subseteq \mathbb{R}^{24}$ against the Leech lattice $\Lambda_{24} \subseteq \mathbb{R}^{24}$ with a product of three $8$-dimensional balls, but this was computationally too expensive to check. 
Then the author prompted the model to test $D_{4} \times D_{4} \subseteq \mathbb{R}^{8}$ against $E_{8} \subseteq \mathbb{R}^{8}$. 
The correct rotation for the $E_8$ lattice was from the model itself.

In investigating height minimization for flat tori, the author was expecting to find a counterexample in dimension 5, 6, 7 or 9. 
The model developed the numerical methods to compute heights of lattices by itself and the algorithm was tested for many different perfect forms in these dimensions to search for a counterexample.
Finally the counterexample was found by the model with the prompt ``Can you test small local deformations of $\Lambda_9$?''
The right perturbation to deform $\Lambda_{9}$ to obtain a counterexample was found by the model.

Chat logs of the sessions that found the counterexamples
can be given on request. 

\subsection{Human interactions}

The author would like to thank Anurag Rao for informing him about Cassels' problem and for some feedback on an earlier draft.

The author would also like to thank Danylo Radchenko for some pre-AI discussions about the heights of lattices. It was Danylo's suggestion that if the conjecture
is false, dimensions 5,6 or 7 might be worth looking at. Dimension 9 had only become available recently.
The author also thanks Peter Sarnak for some feedback on an earlier draft.

\subsection{Funding}

The author is funded by the Swiss National Science Foundation (SNSF) through
the project ``Random Geometry with Arithmetic Constraints'' (grant
no.~225437).

\end{document}